\documentclass[a4paper,12pt]{article}

%
%

\usepackage{color}

%
%


\usepackage{makeidx}
\usepackage{bm}
\usepackage{latexsym}
\usepackage{amscd}
\usepackage{amsmath}
\usepackage{amssymb}

\usepackage{stmaryrd}

\usepackage{amsthm}
\usepackage{float}
\usepackage{graphicx}
\usepackage{psfrag}

%
%

\theoremstyle{plain}
\newtheorem{theorem}{Theorem}[section]

\newtheorem{lemma}[theorem]{Lemma}


%
%

\theoremstyle{definition}
\newtheorem{definition}[theorem]{Definition}
\newtheorem{remark}[theorem]{Remark}

\newtheorem{proposition}[theorem]{Proposition}

%
%




%
%

\newcommand{\bC}{\ensuremath{\mathbb{C}}}

\newcommand{\bF}{\ensuremath{\mathbb{F}}}

\newcommand{\bP}{\ensuremath{\mathbb{P}}}

\newcommand{\bR}{\ensuremath{\mathbb{R}}}

\newcommand{\bZ}{\ensuremath{\mathbb{Z}}}

%
%

\newcommand{\scA}{\ensuremath{\mathcal{A}}}
\newcommand{\scB}{\ensuremath{\mathcal{B}}}
\newcommand{\scC}{\ensuremath{\mathcal{C}}}

\newcommand{\scP}{\ensuremath{\mathcal{P}}}

%
%

%
%

%
%


\newcommand{\Ker}{\operatorname{Ker}}


\newcommand{\A}{\scA}

\newcommand{\R}{\bR}


\newcommand{\barA}{\overline{\scA}}

\newcommand{\barL}{\overline{L}}
\newcommand{\C}{\bC}
\newcommand{\CP}{\bC\bP}
\newcommand{\F}{\bF}
\newcommand{\Z}{\bZ}

\newcommand{\rank}{\operatorname{rank}}


\definecolor{deepblue}{cmyk}{0,0.83,1,0.70}
\definecolor{gray}{cmyk}{0,0,0,0.3}
\definecolor{rred}{cmyk}{0,1,1,0}
\definecolor{chairo}{cmyk}{0,0.83,1,0.70}
\definecolor{roypur}{cmyk}{0.75,0.90,0,0.1}
\definecolor{darkorc}{cmyk}{0.40,0.80,0.20,0}
\definecolor{oliv}{cmyk}{0.64,0.00,0.75,0.56}
\definecolor{azuro}{cmyk}{1,1,0,0.46}

%
%

\title{Degeneration of Orlik-Solomon algebras 
and Milnor fibers of complex line arrangements}
\author{Pauline Bailet\thanks{Laboratoire J.A. Dieudonn\'e UMR CNRS 7351 Universit\'e de Nice Sophia-Antipolis Parc Valrose 06108 NICE Cedex 02 FRANCE 
Email: Pauline.BAILET@unice.fr} and 
Masahiko Yoshinaga\thanks{Department of Mathematics, 
Hokkaido University, 
North 10, West 8, Kita-ku, 
Sapporo 060-0810, 
JAPAN 
E-mail: yoshinaga@math.sci.hokudai.ac.jp}}
\date{\today}
\pagestyle{plain}

%
%

\begin{document}
\maketitle

\begin{abstract}
We give a vanishing theorem for the monodromy 
eigenspaces of the Milnor fibers of complex line arrangements. 
By applying the modular bound of the local system cohomology 
groups given by Papadima-Suciu, the result is deduced from 
the vanishing of the cohomology of certain Aomoto complex over finite fields. 
In order to prove this, we introduce degeneration 
homomorphisms of Orlik-Solomon algebras. 

\end{abstract}

\section{Preliminary}
\label{sec:pre}

Let $\barA=\{\overline{L}_0, \overline{L}_1, \dots, \overline{L}_n\}$ be 
an arrangement of $n+1$ lines in the complex projective plane 
$\CP^2$. Let $\alpha_i$ be a defining linear form of $\barL_i$ and 
$Q(x, y, z)=\prod_{i=0}^n\alpha_i$ be the defining 
equation of $\barA$. Then 
$F=F_{\barA}=\{(x, y, z)\in\C^3\mid Q(x, y, z)=1\}\subset\C^3$ 
is called the \emph{Milnor fiber} of 
$\barA$. 
Since $Q$ is homogeneous of degree $n+1$, the Milnor fiber $F$ has 
a natural automorphism $\rho:F\longrightarrow F$, 
$\rho(x, y, z)=e^{2\pi i/(n+1)}\cdot(x, y, z)$, which is called the 
monodromy automorphism. 
The automorphism $\rho$ has order $n+1$. The first cohomology 
group is decomposed into direct sum of eigenspaces 
as $H^1(F, \C)=\bigoplus_{\lambda^{n+1}=1}H^1(F)_\lambda$, where 
the direct sum runs over complex number $\lambda\in\C^\times$ 
with $\lambda^{n+1}=1$. Note that the $1$-eigenspace is always 
$H^1(F)_1\simeq\C^n$. The nontrivial eigenspaces are 
difficult to compute in general (\cite{coh-suc, suc-mil}). 

It has been known that the order of nontrivial 
eigenvalue $\lambda\in\C^\times$ is related to the multiplicities of 
points on a line $\barL_i$ (\cite{lib-mil,cdo}). To be precise, 
we pose the following definition. 

\begin{definition}
Let $k>1$ be an integer larger than $1$. Then 
$$
\mu(\barL_i, k):=
\sharp\{P\in\barL_i\mid \mbox{$k$ divides }\sharp\barA_P\}, 
$$
where $\barA_P=\{\barL\in\barA\mid P\in\barL\}$. In other words, 
$\mu(\barL_i, k)$ is the number of points on $\barL_i$ with multiplicities 
divisible by $k$. 
\end{definition}
In \cite{lib-mil}, Libgober proved the following. 
\begin{theorem}
(Libgober \cite{lib-mil}) 
Let $k>2$ and $\lambda\in\C^\times$ be a nonzero complex number of order $k$. 
If $\mu(\barL_i, k)=0$ for some $\barL_i\in\barA$, 
then $H^1(F)_\lambda=0$. 
\end{theorem}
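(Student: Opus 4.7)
The plan is to recast the statement as a local-system cohomology vanishing and then extract the conclusion from a non-resonance condition concentrated along $\barL_i$.

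First, by the standard identification of the Milnor fiber as the cyclic $(n+1)$-fold cover $F\to M$ determined by the character $\pi_1(M)\to \Z/(n+1)$ sending each meridian to $1$, one has $H^1(F)_\lambda\simeq H^1(M,\scL_\lambda)$, where $\scL_\lambda$ is the rank-one $\bC$-local system on $M=\CP^2\setminus\bigcup_j\barL_j$ whose monodromy around each meridian is $\lambda$. Thus the task is to show $H^1(M,\scL_\lambda)=0$.

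Second, I would put $\barL_i$ as the line at infinity and let $\pi : W \to \CP^2$ be the blow-up at the multiple points of $\barA$ lying on $\barL_i$. The strict transform $\widetilde{\barL}_i$ together with the exceptional divisors $E_P$, one for each multiple point $P \in \barL_i$, form a simple normal crossings divisor in $W$. The monodromy of $\pi^{*}\scL_\lambda$ around $\widetilde{\barL}_i$ is $\lambda$ and around each $E_P$ is $\lambda^{\sharp\barA_P}$. By the hypothesis $\mu(\barL_i,k)=0$ we have $k \nmid \sharp \barA_P$, hence $\lambda^{\sharp \barA_P}\neq 1$, so $\pi^{*}\scL_\lambda$ has non-trivial monodromy around every irreducible component of the ``infinity divisor'' $\widetilde{\barL}_i \cup \bigcup_P E_P$. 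Standard Koszul stalk computations at the normal-crossings points with all-non-trivial monodromies then yield the vanishing of $R^q j_{*} \scL_\lambda$ along this divisor.

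Third, I would combine this one-sided non-resonance with the affineness of $V:=\CP^2 \setminus \bigcup_{j \neq i} \barL_j$ and an Artin-style vanishing to deduce $H^1(M, \scL_\lambda)=0$. Concretely, the identification $Rj_{*}\scL_\lambda = j_{!}\scL_\lambda$ along the infinity divisor over $\barL_i$ should allow one to compare $H^{\bullet}(M,\scL_\lambda)$ with $H^{\bullet}_c(M,\scL_\lambda)$ and invoke the vanishing $H^1_c(M,\scL_\lambda)=0$ coming from the dual form of Artin vanishing on the smooth affine surface $M$.

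The principal obstacle lies in this third step. Multiple points of $\barA$ off $\barL_i$ may have multiplicity divisible by $k$, so a globally SNC resolution of $\bigcup_j\barL_j$ can have boundary components around which $\scL_\lambda$ has trivial monodromy; hence one cannot directly invoke the Esnault--Schechtman--Viehweg vanishing that requires non-resonance at \emph{every} dense edge. The delicate point is therefore to show that the \emph{one-sided} non-resonance along $\barL_i$, combined with the affineness of $V$, already suffices to force the degree-one vanishing--this is precisely the refinement that the paper's degeneration of Orlik--Solomon algebras and modular Aomoto-complex framework is designed to supply.
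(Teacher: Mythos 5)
The paper does not actually prove this statement: it is quoted from Libgober \cite{lib-mil} as motivation, and the paper's own machinery is aimed at the stronger Theorems \ref{thm:main1} and \ref{thm:main2}. So your argument must stand on its own, and it does not. Steps 1 and 2 are correct and standard: the identification $H^1(F)_\lambda\simeq H^1(M,\scL_\lambda)$, and the computation that the monodromy around $\widetilde{\barL}_i$ is $\lambda\neq 1$ and around each $E_P$ is $\lambda^{\sharp\barA_P}\neq 1$ because $k\nmid\sharp\barA_P$ (note that at the double points of $\barL_i$ this silently uses $k>2$). But Step 3 carries the entire mathematical content of the theorem and you leave it unproved, as you yourself acknowledge. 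The comparison $H^1(M,\scL_\lambda)\simeq H^1_c(M,\scL_\lambda)$ via $j_!\scL_\lambda\simto Rj_*\scL_\lambda$ requires non-trivial monodromy around \emph{every} component of the boundary divisor of a good compactification, and multiple points of $\barA$ off $\barL_i$ may well be resonant; so the Artin-vanishing comparison as sketched does not close. The assertion that non-resonance along $\barL_i$ alone forces the vanishing is precisely the Cohen--Dimca--Orlik non-resonance theorem \cite{cdo} (equivalently, it follows from Libgober's divisibility theorem bounding the global Alexander polynomial by the product of local ones at the points of $\barL_i$); either is a genuine theorem requiring an argument you have not supplied (e.g.\ perversity of $Rk_*\scL_\lambda$ on the affine variety $\CP^2\setminus\bigcup_{j\neq i}\barL_j$, or a Lefschetz-pencil induction), not a routine manipulation.

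Your closing remark that the paper's degeneration of Orlik--Solomon algebras is ``designed to supply'' this missing refinement is also off the mark: that framework feeds into the Papadima--Suciu modular bound (Theorem \ref{thm:ps}), which only controls eigenvalues whose order is a prime power dividing $n+1$, and it produces a vanishing of an Aomoto complex over $\F_p$, not the local-system statement you need. It therefore cannot recover Libgober's theorem for an arbitrary order $k>2$. To repair your proof, replace Step 3 by an appeal to \cite{cdo}, whose hypotheses --- non-trivial monodromy around $\barL_i$ and around every dense edge contained in $\barL_i$ --- are exactly what your Step 2 verifies, or reproduce Libgober's divisibility argument.
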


The purpose of this paper is to generalize the above result 
to the case $\mu(\barL_i, k)=1$. Our first main result is the following. 

\begin{theorem}
\label{thm:main1}
Let $p\in\Z$ be a prime 
and $q=p^\ell$ be its power satisfying $q|(n+1)$. 
Let $\lambda\in\C^\times$ be a nonzero complex number of order $q$. 
Assume that $\barA$ is essential, that is $\barA$ has at least two 
intersections in $\CP^2$. 
If $\mu(\barL_i, p)\leq 1$, for some $\barL_i\in\barA$, then 
$H^1(F)_\lambda=0$. 
\end{theorem}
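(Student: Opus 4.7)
The plan is to convert the transcendental statement about $H^1(F)_\lambda$ into a purely algebraic vanishing of an Aomoto complex over $\F_p$, and then to establish this vanishing by degenerating the Orlik-Solomon algebra of $\barA$ to that of a simpler arrangement to which Libgober's theorem applies.

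First, I would invoke the Papadima-Suciu modular bound. For $\lambda$ of order $q=p^\ell$ with $q\mid(n+1)$, this gives
\[
\dim_\C H^1(F)_\lambda\ \leq\ \dim_{\F_p} H^1\bigl(A^\bullet(\barA;\F_p),\ \omega\wedge-\bigr),
\]
where $\omega=\sum_{j=0}^{n}e_j\in A^1(\barA;\F_p)$ is the class reflecting the character whose cyclic reduction corresponds to $\lambda$. Thus it suffices to prove that this first Aomoto cohomology vanishes, and the problem becomes combinatorial.

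Second, I would introduce a degeneration homomorphism of Orlik-Solomon algebras. Fix $\barL_i$ with $\mu(\barL_i,p)\leq 1$, and let $P$ denote the at-most-one ``bad'' point on $\barL_i$ (of multiplicity divisible by $p$). The idea is to build a combinatorial degeneration from $\barA$ to an arrangement $\barA'$ in which $\barL_i$ becomes generic with respect to the modulus $p$, so that $\mu(\barL_i,p)=0$ for $\barA'$; concretely one would disperse the concurrent lines through $P$ into generic intersections with $\barL_i$. This should yield a chain map between the Aomoto complexes of the two arrangements that commutes with $\omega\wedge-$ and is injective on $H^1$. Libgober's theorem, applied to $\barA'$ in its Aomoto-complex incarnation, then forces the target's first cohomology to vanish, and the injectivity transfers this conclusion back to $\barA$.

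The main obstacle will be the construction of the degeneration homomorphism and the verification of injectivity on first Aomoto cohomology in characteristic $p$. Since $A^\bullet(\barA;\F_p)$ depends only on the intersection matroid, the ``degeneration'' must be realized as an explicit chain map between the OS-algebras of two combinatorially distinct arrangements rather than as a geometric family; one must track generators and Orlik-Solomon relations carefully enough to ensure compatibility with $\omega$. The hypothesis $\mu(\barL_i,p)\leq 1$ is what makes this feasible: a single bad point can be absorbed into a generic configuration, whereas two or more would resist simultaneous regularization by any single partial degeneration. The essentiality assumption on $\barA$ is needed to guarantee that $\barA'$ remains a bona fide projective line arrangement with enough intersection structure for Libgober's theorem to apply.
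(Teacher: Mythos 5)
Your first step (the Papadima--Suciu reduction to the vanishing of $H^1$ of the Aomoto complex $(A^\bullet_{\F_p}(\A),\nu_{\F_p})$) matches the paper exactly. The gap is in the second step. The map you propose --- from $\barA$ to an arrangement $\barA'$ obtained by dispersing the lines through the bad point $P$ into general position --- is a \emph{generification}, not a degeneration, and no Orlik--Solomon algebra homomorphism $A^*(\barA)\to A^*(\barA')$ sending $e_i\mapsto e_i$ exists in that direction: the relation $e_i\wedge e_j-e_i\wedge e_k+e_j\wedge e_k=0$ coming from the concurrency at $P$ holds in the source but its image is nonzero in the target, since $A^2$ of the \emph{more special} arrangement is the further quotient. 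Degeneration homomorphisms only go from $\A$ toward arrangements with more incidences, and that is exactly how the paper uses them: it maps $A^*_{\F_p}(\A)$ onto the OS algebras of a central arrangement $\scC_s$ (one line per parallel class) and of almost parallel arrangements $\scP_{m_\alpha}$ (one parallel class kept, everything else collapsed to a single transversal). Moreover, even granting a map, your argument hinges on injectivity on first Aomoto cohomology, which you do not justify and which the paper never needs: instead it takes a cocycle $\eta$ with $\eta\wedge\nu_{\F_p}=0$, pushes it through each degeneration, and uses the explicitly computed vanishing of the degenerate kernels (Propositions \ref{prop:central} and \ref{prop:parallel}, applicable because $p\nmid(m_\alpha+1)$ for every direction $\alpha$ except the single bad one) to force all coefficients of $\eta$ outside the bad parallel class $\A_1$ to vanish.

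That leaves the residual case your proposal does not address: a putative $\eta$ supported entirely on $\A_1$, where the directional degeneration is useless because $n-m_1=0$ in $\F_p$. The paper disposes of it via the Brieskorn decomposition: the classes $e_i\wedge e_j$ with $L_i\in\A_1$ and $L_j\notin\A_1$ are linearly independent in $A^2_{\F_p}(\A)$, so $\eta\wedge\nu_{\F_p}=\eta\wedge(e_{r+1}+\cdots+e_n)\neq 0$ unless $\eta=0$. Essentiality enters precisely here --- to guarantee that some line outside $\A_1$ exists ($r<n$) --- not to keep $\barA'$ an honest arrangement. Finally, invoking ``Libgober's theorem in its Aomoto-complex incarnation'' for $\barA'$ is circular in this setting: the cited theorem concerns $H^1(F)_\lambda$, and its $\F_p$-Aomoto analogue is not a black box one can quote but would itself have to be proved by the same degeneration machinery (it is, in effect, the $\mu=0$ special case of the argument sketched above).
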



\begin{remark}
Note that the Milnor fiber $F$ of the 
$A_3$-arrangement ($xyz(x-y)(x-z)(y-z)=0$) 
satisfies $\mu(\barL_i, 3)=2$ for any 
line $\barL_i$, and also $H^1(F)_{e^{2\pi i/3}}\simeq\C$ 
(\cite{coh-suc}). 
Hence nontrivial eigenspaces can appear if 
$\mu(L_i, k)\geq 2$ for any line. 
The assumption 
``$\mu(\barL_i, k)\leq 1$'' can not be weakened. 
\end{remark}

\begin{remark}
In the case that $\barA$ is defined over $\R$, 
a stronger form of Theorem \ref{thm:main1} 
has been proved in \cite{wil, yos-mil}. Namely, 
for any integer $k>1$ and any nonzero complex number 
$\lambda\in\C^\times$ of order $k$, if 
$\mu(\barL_i, k)\leq 1$ for some $\barL_i\in\barA$, then 
$H^1(F)_\lambda=0$. 
\end{remark}

If we choose a line, say $\barL_0$, we can identify 
$\CP^2\setminus\barL_0$ with $\C^2$. 
We obtain the arrangement of affine lines, 
$\A=\{L_1, \dots, L_n\}$ in $\C^2$, 
the so-called deconing of $\barA$, 
where $L_i=\barL_i\cap\C^2$. 
We denote the complement by 
$M(\A)=\C^2\setminus\bigcup_{i=1}^nL_i=\CP^2\setminus
\bigcup_{i=0}^n\barL_i$. 

Let $R$ be a commutative ring. Then the cohomology ring 
$A_R^*(\A):=H^*(M(\A), R)$ is called the Orlik-Solomon algebra, 
which has the following combinatorial description (\cite{orl-ter}). 
\begin{itemize}
\item 
$A_R^0(\A)= R$, 
\item 
$A_R^1(\A)= R\cdot e_1\oplus R\cdot e_2\oplus\dots\oplus R\cdot e_n$, 
\item 
$A_R^2(\A)= A^1_R(\A)^{\wedge 2}/I$, where 
$I\subset A^1_R(\A)^{\wedge 2}$ is an $R$-submodule 
generated by the following elements. 
\begin{itemize}
\item[(i)] $e_i\wedge e_j$, for each pair of parallel lines $L_i//L_j$, 
\item[(ii)] $e_i\wedge e_j-e_i\wedge e_k+e_j\wedge e_k$ for 
three lines such that $L_i\cap L_j\cap L_k\neq\emptyset$. 
\end{itemize}
\item $A^q_R(\A)=0$ for $q\geq 3$. 
\end{itemize}
Let $\xi\in A^1_R(\A)$ be an element of degree one. Then the 
cochain complex $(A_R^\bullet(\A), \xi)$ is called the Aomoto complex. 
Let us denote the sum of generators $e_i$ by 
\begin{equation}
\nu_R:=e_1+e_2+\dots+e_n\in A^1_R(\A). 
\end{equation}
The cohomology of the Aomoto complex 
$(A_R^\bullet(\A), \nu_R)$ 
is known to be related with the eigenspaces of the Milnor 
fiber. When $R$ is a field, we denote 
the rank of the $1$-st cohomology by 
$$
\beta_1(\A, \nu_R):=
\rank_R H^1(A_R^\bullet(\A), \nu_R). 
$$
This invariant gives an upper bound for the 
dimension of eigenspace. 

\begin{theorem}
\label{thm:ps}
(Papadima and Suciu \cite{ps-sp}) 
Let $\barA$ be as before (we do not pose any conditions on the 
multiplicities). Let $q=p^\ell$ be a prime power satisfying 
$q|(n+1)$. Let 
$\lambda\in\C^\times$ be a nonzero complex number of order $q$. 
Then 
\begin{equation}
\dim H^1(F)_\lambda\leq\beta_1(\A, \nu_{\F_p}), 
\end{equation}
where $\F_p=\Z/p\Z$. 
\end{theorem}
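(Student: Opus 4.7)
The plan is to combine three ingredients: identification of the Milnor-fiber eigenspaces with local-system cohomology, comparison with mod-$p$ cohomology of the associated cyclic cover, and identification of the resulting graded invariant with Aomoto cohomology.

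First, I recall the standard fact that, for the unbranched cyclic cover $F \to M(\A)$ of degree $n+1$, the eigenspace decomposition reads
\[
H^1(F, \C) = \bigoplus_{\lambda^{n+1} = 1} H^1(M(\A), \scL_\lambda),
\]
where $\scL_\lambda$ denotes the rank-one complex local system on $M(\A)$ whose monodromy around each line is $\lambda$. It therefore suffices to produce an upper bound on $\dim_\C H^1(M(\A), \scL_\lambda)$ in terms of $\beta_1(\A, \nu_{\F_p})$.

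Second, I would pass to $\F_p$-coefficients via the intermediate degree-$q$ cyclic cover $\pi\colon F_q \to M(\A)$, where $q = p^\ell$. The key structural observation is that, since $q$ is a $p$-power, the group algebra $\F_p[\Z/q] \cong \F_p[\sigma]/(\sigma - 1)^q$ (with $\sigma$ a generator of $\Z/q$) is a local Artin ring with maximal ideal $(\sigma - 1)$. The direct image $\pi_* \F_p$—a rank-$q$ local system on $M(\A)$ isomorphic to the regular representation—therefore carries a canonical $(\sigma - 1)$-adic filtration whose associated graded pieces are each isomorphic to the trivial $\F_p$-local system.

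Third, I would analyze the Leray spectral sequence attached to this $I$-adic filtration. Its $E_1$-page is a suitably bigraded version of $H^*(M(\A), \F_p) \otimes_{\F_p} \F_p[\sigma]/(\sigma - 1)^q$, with $d_1$-differential given by multiplication by the class $\nu_{\F_p} \in H^1(M(\A), \F_p) \cong A^1_{\F_p}(\A)$, which measures the obstruction to splitting off a single graded piece of the filtration. Consequently, the $E_2$-page in total degree one identifies with a direct sum of copies of $H^1(A^\bullet_{\F_p}(\A), \nu_{\F_p})$. Tracking the $\lambda$-eigenspace $H^1(M(\A), \scL_\lambda) \subset H^1(F_q, \C)$ through the filtration by universal coefficients $\C \leftrightarrow \F_p$, one arrives at the desired bound $\dim_\C H^1(F)_\lambda \leq \beta_1(\A, \nu_{\F_p})$.

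The main obstacle is the precise identification of the $d_1$-differential as multiplication by $\nu_{\F_p}$, together with the correct matching of the complex $\lambda$-eigenspace with a single graded piece of the $\F_p$-filtration (rather than being smeared across all of them). These technical points are the heart of the Papadima--Suciu argument; they rely on Sullivan-type $1$-minimal model techniques and the tangent-cone comparison between the characteristic and resonance varieties at the trivial character.
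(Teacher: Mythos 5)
The paper does not prove this statement: Theorem \ref{thm:ps} is quoted from Papadima--Suciu \cite{ps-sp} and used as a black box, so there is no internal proof to compare yours against. Judged on its own terms, your outline does track the actual strategy of \cite{ps-sp} at a high level: the identification $H^1(F,\C)_\lambda\simeq H^1(M(\A),\scL_\lambda)$, the observation that $\F_p[\Z/q]\simeq\F_p[\sigma]/(\sigma-1)^q$ is local, the $(\sigma-1)$-adic filtration of the equivariant chain complex, and a spectral sequence whose $d_1$ is cup product with $\nu_{\F_p}$, with $E_2$ built out of the Aomoto cohomology. (Minor quibble: this is the spectral sequence of a filtered complex, not a Leray spectral sequence.)

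However, what you submit is a sketch that explicitly defers the two hardest steps, and the way you propose to close them is not right. First, the complex $\lambda$-eigenspace is \emph{not} matched with a single graded piece of the mod-$p$ filtration; the actual mechanism is the opposite: since $t^q-1=(t-1)^q$ in characteristic $p$, \emph{all} eigenvalues of $p$-power order collapse onto the generalized $1$-eigenspace mod $p$, and the bound is obtained by combining the semicontinuity $\dim_{\F_p}H_1(F,\F_p)\ge\dim_\C H_1(F,\C)$ under reduction mod $p$ with the fact that dimensions can only decrease from $E_2$ to $E_\infty$; the bookkeeping that extracts the single summand $\beta_1(\A,\nu_{\F_p})$ from this is precisely the content you have not supplied. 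Second, Sullivan $1$-minimal models and the tangent-cone comparison between characteristic and resonance varieties are the tools for the \emph{complex} (characteristic-zero) resonance bounds, not for this modular bound, so invoking them does not fill the gap. As a reconstruction of the cited proof your proposal identifies the correct scaffolding but leaves the load-bearing steps unproved and points to the wrong machinery for completing them.
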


Our second main theorem is the following. 

\begin{theorem}
\label{thm:main2}
Let $p\in\Z$ be a prime. 
Assume that $\barA$ is essential, that is $\barA$ has at least two 
intersections in $\CP^2$. 
If $\mu(\barL_i, p)\leq 1$, for some $\barL_i\in\barA$, then 
$H^1(A_{\F_p}^\bullet(\A), \nu_{\F_p})=0$. 
\end{theorem}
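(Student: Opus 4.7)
The plan is to prove that every $\omega \in A^1_{\F_p}(\A)$ satisfying $\omega \wedge \nu_{\F_p} = 0$ lies in $\F_p \cdot \nu_{\F_p}$, which is precisely the image of the differential $A^0_{\F_p}(\A) \to A^1_{\F_p}(\A)$. The first step is a Brieskorn-type direct sum decomposition
$$A^2_{\F_p}(\A) \;=\; \bigoplus_{P} A^2_{\F_p}(\A_P),$$
the sum taken over intersection points $P \in \C^2$; parallel classes contribute nothing because the OS parallel relations kill $e_i \wedge e_j$ whenever $L_i \parallel L_j$. Under this decomposition, $\omega \wedge \nu_{\F_p} = 0$ is equivalent to $\omega_P \wedge \nu_P = 0$ in each $A^2_{\F_p}(\A_P)$. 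Writing $\omega = \sum a_i e_i$, a direct pencil computation translates this into the arithmetic condition $m_P a_i \equiv \sum_{L_j \ni P} a_j \pmod p$ for every $L_i \ni P$; thus at a \emph{good} point $P$ (where $p \nmid m_P$) all coefficients attached to lines through $P$ must coincide, while at a \emph{bad} point (where $p \mid m_P$) only the single relation $\sum_{L_j \ni P} a_j \equiv 0 \pmod p$ is imposed.

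Subtracting a multiple of $\nu_{\F_p}$ I may assume $a_{i_0} = 0$ for some fixed $i_0$, so the target is $a_i = 0$ for every $i$. I would then build the \emph{good intersection graph} $G$ on the lines of $\A$, whose edges are pairs $\{L_i,L_j\}$ sharing a good intersection point in $\C^2$. The good-point relations immediately propagate the value $0$ to every line in the connected component of $L_{i_0}$ in $G$. The remaining task is to force $a_i=0$ on every other component, using the bad-point sum relations together with the hypothesis on $\barL_0$.

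The hypothesis $\mu(\barL_0, p) \le 1$ translates in the affine picture as: at most one parallel class $D^*$ of $\A$ satisfies $|D^*|+1 \equiv 0 \pmod p$, every other parallel direction on $\barL_0$ being good. Here I would deploy the degeneration homomorphism of Orlik--Solomon algebras introduced earlier in the paper to compare $\A$ with an auxiliary, more degenerate arrangement $\A^{\mathrm{deg}}$ obtained by coalescing the lines of $D^*$ so that they become concurrent at a finite point $Q \in \C^2$. Because $|D^*| \equiv -1 \pmod p$, the multiplicity at $Q$ in $\A^{\mathrm{deg}}$ is coprime to $p$, so $Q$ is a good point of $\A^{\mathrm{deg}}$. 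The degeneration homomorphism is a map of OS algebras that is the identity in degree $1$ and compatible with the Aomoto differential $\nu_{\F_p} \wedge -$; the resulting comparison of Aomoto kernels reduces the original statement to the case $\mu(\barL_0, p)=0$, where every parallel class is good and the good intersection graph becomes connected enough to force all $a_i$ to agree directly.

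The main obstacle is the construction and verification of the degeneration homomorphism itself: one needs to make the comparison of OS algebras precise, confirm its compatibility with the Aomoto differential, and check that the induced relation between Aomoto kernels points in the useful direction — namely that vanishing in $\A^{\mathrm{deg}}$ implies vanishing in $\A$. Once this framework is in place, the remaining combinatorics at good and bad intersection points is routine, and the case $\mu(\barL_0, p)=0$ can be handled as a direct, Libgober-style argument adapted to the Aomoto complex.
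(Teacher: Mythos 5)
Your local analysis at the finite intersection points is sound: the Brieskorn decomposition of $A^2_{\F_p}(\A)$ over points of $\C^2$, the equivalence of $\omega\wedge\nu_{\F_p}=0$ with the pencil conditions $m_P a_i=\sum_{L_j\ni P}a_j$, and the distinction between good and bad points are all correct, and this matches the last step of the paper's argument (where the linear independence of the $e_i\wedge e_j$ with $L_i$ in the bad class and $L_j$ outside it finishes the proof). The gap is in the step you yourself flag as the main obstacle, and it is fatal as stated. The arrangement $\A^{\mathrm{deg}}$ obtained by making the lines of $D^*$ concurrent at a finite point $Q$ is \emph{not} a degeneration of $\A$ in the sense required for an Orlik--Solomon homomorphism: for $L_i,L_j\in D^*$ the parallel relation $e_i\wedge e_j=0$ holds in $A^2_{\F_p}(\A)$ but fails in $A^2_{\F_p}(\A^{\mathrm{deg}})$, where these lines now meet at $Q$; so no algebra map $A^*_{\F_p}(\A)\to A^*_{\F_p}(\A^{\mathrm{deg}})$ can be the identity in degree $1$. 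Projectively, your construction splits the multiple point $P^*\in\barL_0$ into simple points, i.e.\ it \emph{removes} dependencies containing $\barL_0$; it is a generization, not a degeneration. The homomorphism that does exist goes the other way, $A^*_{\F_p}(\A^{\mathrm{deg}})\to A^*_{\F_p}(\A)$, and it yields $\Ker(\nu\wedge)_{\A^{\mathrm{deg}}}\subseteq\Ker(\nu\wedge)_{\A}$ --- exactly the wrong inclusion for deducing vanishing for $\A$ from vanishing for $\A^{\mathrm{deg}}$. The paper's degenerations are genuinely different: the total degeneration collapses $\A$ onto a pencil $\scC_s$ (one line per parallel class) and the directional degeneration collapses it onto a near-pencil $\scP_r$ (one class kept parallel, everything else merged into a single transversal); both only \emph{add} dependencies, so the maps exist and the kernel comparisons point the useful way.

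The second gap is the base case $\mu(\barL_0,p)=0$, which you declare routine. Your good intersection graph sees only finite points, while the hypothesis constrains the points at infinity; nothing forces that graph to be connected (all finite multiplicities could be divisible by $p$), and the information carried by the parallel classes never enters your propagation. This is precisely what the paper's two degenerations extract: the total degeneration gives $\sum_{L_i\in\A_\alpha}c_i=0$ for every class $\A_\alpha$, and the directional degeneration with respect to each class with $p\nmid(\sharp\A_\alpha+1)$ forces, via Proposition \ref{prop:parallel}, the vanishing of all coefficients on that class, leaving $\omega$ supported on the single possibly bad class, where your finite-point analysis does apply. To repair your proof you would need to replace the coalescing construction by these two degenerations (or derive the per-class sum relations and the per-class vanishing by hand from the finite-point conditions), which is essentially reconstructing the paper's argument.
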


Theorem \ref{thm:main1} easily follows from 
Theorem \ref{thm:ps} and Theorem \ref{thm:main2}. 
In order to prove 
Theorem \ref{thm:main2}, we 
introduce degeneration homomorphism 
$A^*_R(\A)\longrightarrow A^*_R(\scB)$ of Orlik-Solomon algebras, 
where $\scB$ is certain ``degenerated arrangement'' in \S\ref{sec:degen}. 
The structure of degenerate Orlik-Solomon algebra $A_R^*(\scB)$ is 
studied in \S\ref{sec:os}. 
By using the degeneration homomorphisms, 
we prove Theorem \ref{thm:main2} in \S\ref{sec:van}. 

\begin{remark}
In this paper, $\barA$ denotes a projective line arrangement 
in $\bC\bP^2$ and $\A$ denotes an affine line arrangement 
in $\bC^2$. 
\end{remark}

\section{Typical Orlik-Solomon algebras}
\label{sec:os}

In this section, we summarize basic results on the first cohomology 
of the Aomoto complex. The results in this section is well know 
(e.g., \cite{lib-yuz}), however, we gave proofs for completeness. 
Let $\A=\{L_1, \dots, L_n\}$ be an arrangement of affine lines 
in $\C^2$. Let $\xi=a_1e_1+\dots+ a_ne_n\in A_R^1(\A)$. 
Let $R$ be a commutative ring. 
Consider 
the Aomoto complex $(A_R^\bullet(\A), \xi)$. 
Define the submodule $A_R^1(\A)_0$ of $A_R^1(\A)$ by 
\begin{equation}
A_R^1(\A)_0:=\{\eta=c_1e_1+\cdots+c_ne_n\in A_R^1(\A)\mid
c_1+c_2+\dots+c_n=0\}. 
\end{equation}
We have the following. 

\begin{lemma}
\label{lem:ker}
If $\sum_{i=1}^n a_i\in R^\times$, then 
\begin{equation}
H^1(A_R^\bullet(\A), \xi)
\simeq
\Ker\left(A_R^1(\A)_0\stackrel{\xi\wedge}{\longrightarrow}
A_R^2(\A)\right). 
\end{equation}
\end{lemma}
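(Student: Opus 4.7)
The plan is to realize $H^1(A_R^\bullet(\A),\xi)$ as $\Ker(\xi\wedge\,\cdot)/\Image(\xi\wedge\,\cdot)$, explicitly identify the image, and then split off this image as a direct summand using the hypothesis that $s:=\sum_{i=1}^na_i$ is a unit in $R$.

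First I would observe that the coboundary $A_R^0(\A)=R\to A_R^1(\A)$ sends $1\mapsto\xi$, so $\Image(A_R^0\to A_R^1)=R\cdot\xi$. Also, because $\xi$ has degree one, the standard computation $\xi\wedge\xi=\sum_{i<j}(a_ia_j-a_ja_i)\,e_i\wedge e_j=0$ shows $R\cdot\xi\subseteq\Ker(\xi\wedge:A_R^1\to A_R^2)$. Therefore
\[
H^1(A_R^\bullet(\A),\xi)\ =\ \Ker(\xi\wedge)\,\big/\,R\cdot\xi.
\]

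Next I would introduce the augmentation $\sigma:A_R^1(\A)\to R$ defined by $\sigma(e_i)=1$, whose kernel is exactly $A_R^1(\A)_0$. Since $\sigma(\xi)=s\in R^\times$, every $\eta\in A_R^1(\A)$ can be written uniquely as
\[
\eta\ =\ s^{-1}\sigma(\eta)\cdot\xi\ +\ \bigl(\eta-s^{-1}\sigma(\eta)\xi\bigr),
\]
where the second summand lies in $A_R^1(\A)_0$; uniqueness holds because $c\xi\in A_R^1(\A)_0$ forces $cs=0$ and hence $c=0$. This gives the internal direct sum decomposition $A_R^1(\A)=R\cdot\xi\,\oplus\,A_R^1(\A)_0$.

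Finally, since $R\cdot\xi$ is contained in $\Ker(\xi\wedge)$, the modular law for submodules of a direct sum yields
\[
\Ker(\xi\wedge:A_R^1\to A_R^2)\ =\ R\cdot\xi\ \oplus\ \Ker\bigl(\xi\wedge:A_R^1(\A)_0\to A_R^2(\A)\bigr).
\]
Quotienting by $R\cdot\xi$ gives the desired isomorphism. The whole argument is formal once the unit hypothesis is invoked to produce the splitting $\sigma$; the only point that requires any care is verifying $\xi\wedge\xi=0$ in $A_R^2(\A)$, but this is immediate from skew-symmetry of the wedge on degree-one elements and commutativity of $R$, so there is no serious obstacle.
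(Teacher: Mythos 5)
Your proof is correct and fills in exactly the details the paper omits: the paper's own proof merely asserts that the natural map $\Ker\left(A_R^1(\A)_0\to A_R^2(\A)\right)\to H^1(A_R^\bullet(\A),\xi)$ is bijective, and your splitting $A_R^1(\A)=R\cdot\xi\oplus A_R^1(\A)_0$ via the augmentation $\sigma$ (using that $\sigma(\xi)=\sum_i a_i$ is a unit), together with $\xi\wedge\xi=0$ and the modular law, is precisely the verification behind that assertion. This is the same approach, just carried out explicitly.
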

\begin{proof}
It is easily seen that 
the natural homomorphism 
$$
\Ker\left(A_R^1(\A)_0\stackrel{\xi\wedge}{\longrightarrow}
A_R^2(\A)\right)\longrightarrow
H^1(A_R^\bullet(\A), \xi)
$$
is surjective and injective. 
\end{proof}

\subsection{Central case}
\label{subsec:cent}

Let $\scC_s=\{L_1, \dots, L_s\}$ be $s$ lines passing through the 
origin as in Figure \ref{fig:central}. 
Let $\xi=a_1e_1+\cdots+a_se_s\in A_R^1(\scC_s)$.

\begin{figure}[htbp]
\begin{picture}(400,100)(0,0)
\thicklines

\put(140,10){\line(1,1){90}}
\put(160,10){\line(2,3){60}}
\put(180,10){\line(1,3){30}}

\multiput(195,4)(10,0){4}{\circle*{3}}

\put(240,10){\line(-2,3){60}}

\put(132,0){$L_1$}
\put(152,0){$L_2$}
\put(172,0){$L_3$}
\put(240,0){$L_s$}

\end{picture}
      \caption{Central arrangement $\scC_s$}
\label{fig:central}
\end{figure}

The module $A_R^1(\scC_s)_0$ is free of rank $s-1$, with a 
basis 
$\{e_i-e_{i+1}\}_{1\leq i\leq s-1}$. 
Also $A_R^2(\scC_s)$ has rank $s-1$ with a basis  
$\{e_i\wedge e_{i+1}\}_{1\leq i\leq s-1}$. 
A straightforward computation shows that 
\begin{equation}
\xi\wedge(e_i-e_{i+1})=
-(a_1+a_2+\cdots+a_s)\cdot e_i\wedge e_{i+1}. 
\end{equation}
Thus we have the following. 

\begin{proposition}
\label{prop:central}
If $\sum_{i=1}^sa_i\in R^\times$, then $H^1(A_R^\bullet(\scC_s), \xi)=0$. 
\end{proposition}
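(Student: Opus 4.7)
The plan is to reduce the statement to injectivity of the wedge map $\xi\wedge\colon A^1_R(\scC_s)_0\longrightarrow A^2_R(\scC_s)$ via Lemma~\ref{lem:ker}, and then to observe that in the natural bases this map is diagonal with a single unit eigenvalue.

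First I would invoke Lemma~\ref{lem:ker}: since $\sum_{i=1}^s a_i\in R^\times$, proving $H^1(A_R^\bullet(\scC_s),\xi)=0$ is equivalent to showing that the restriction
\begin{equation*}
\xi\wedge\colon A^1_R(\scC_s)_0\longrightarrow A^2_R(\scC_s)
\end{equation*}
has trivial kernel. Next I would use the explicit descriptions already recorded: $A^1_R(\scC_s)_0$ is $R$-free on $\{e_i-e_{i+1}\mid 1\le i\le s-1\}$ and $A^2_R(\scC_s)$ is $R$-free on $\{e_i\wedge e_{i+1}\mid 1\le i\le s-1\}$ (the latter because all $s$ lines are concurrent, so the Orlik-Solomon relations identify every $e_j\wedge e_k$ with an $R$-combination of the chosen basis).

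The crucial computation is the displayed identity
\begin{equation*}
\xi\wedge(e_i-e_{i+1})=-\Bigl(\sum_{j=1}^s a_j\Bigr)\,e_i\wedge e_{i+1},
\end{equation*}
which I would verify by splitting the sum $\xi=\sum_j a_j e_j$ into the two cases $j\in\{i,i+1\}$ (giving the contribution $-a_i\,e_i\wedge e_{i+1}-a_{i+1}\,e_i\wedge e_{i+1}$ directly) and $j\notin\{i,i+1\}$, where the triple-point relation (ii) from the Orlik-Solomon presentation applied to the concurrent lines $L_i,L_{i+1},L_j$ forces $e_j\wedge e_i-e_j\wedge e_{i+1}=-e_i\wedge e_{i+1}$.

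The identity shows that in the chosen bases the map $\xi\wedge$ is represented by the scalar matrix $-\bigl(\sum_{i=1}^s a_i\bigr)\cdot I_{s-1}$; since this scalar is a unit in $R$, the map is an $R$-module isomorphism and a fortiori injective. Combined with Lemma~\ref{lem:ker}, this gives $H^1(A_R^\bullet(\scC_s),\xi)=0$. I do not anticipate a real obstacle here: the only subtlety is making sure the triple-point relation is applied correctly so that all the $s-2$ "off-diagonal" terms collapse onto a single basis element, which is exactly what forces the map to be diagonal rather than merely upper-triangular.
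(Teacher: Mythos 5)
Your proposal is correct and follows exactly the paper's argument: reduce via Lemma~\ref{lem:ker} to injectivity of $\xi\wedge$ on $A^1_R(\scC_s)_0$, then use the identity $\xi\wedge(e_i-e_{i+1})=-\bigl(\sum_{j=1}^s a_j\bigr)e_i\wedge e_{i+1}$ (which the paper states as a ``straightforward computation'' and you verify correctly, including the use of the concurrency relation to collapse the off-diagonal terms) to see the map is a unit scalar times the identity in the given bases.
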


\subsection{Almost parallel case}
\label{subsec:par}

Let $\scP_r=\{L_1, \dots, L_r, L_{r+1}\}$ be an arrangement of 
$r+1$ lines in $\C^2$ such that $L_1, \dots, L_r$ are parallel 
and $L_{r+1}$ is transversal to the others as in 
Figure \ref{fig:parallel}. 
Then $A_R^2(\scP_r)$ has rank $r$ with a basis 
$\{e_i\wedge e_{r+1}\}_{1\leq i\leq r}$. 
Let $\xi=a_1e_1+\cdots+a_re_r+a_{r+1}e_{r+1}\in A_R^1(\scP_r)$.

\begin{figure}[htbp]
\begin{picture}(400,100)(0,0)
\thicklines

\put(140,10){\line(1,3){30}}
\put(160,10){\line(1,3){30}}
\put(210,10){\line(1,3){30}}

\multiput(170,4)(10,0){3}{\circle*{3}}

\put(260,10){\line(-3,2){130}}

\put(130,0){$L_1$}
\put(150,0){$L_2$}
\put(200,0){$L_r$}
\put(260,0){$L_{r+1}$}

\end{picture}
      \caption{Almost parallel arrangement $\scP_r$}
\label{fig:parallel}
\end{figure}

\begin{proposition}
\label{prop:parallel}
If $a_{r+1}\in R^\times$, then 
\begin{equation}
H^1(A_R^\bullet(\scP_r), \xi)\simeq 
\Ker\left(
R\cdot e_1\oplus\cdots\oplus R\cdot e_r
\stackrel{\xi\wedge}{\longrightarrow}
A_R^2(\scP_r)
\right)=0. 
\end{equation}
\end{proposition}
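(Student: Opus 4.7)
The plan is to give a direct change-of-basis argument, since Lemma \ref{lem:ker} does not apply here: the hypothesis $a_{r+1} \in R^\times$ is strictly weaker than requiring the full sum $a_1+\cdots+a_{r+1}$ to be a unit.

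First I would use the unit $a_{r+1}$ to produce a direct sum decomposition
$$A^1_R(\scP_r) = R\cdot\xi \;\oplus\; N, \qquad N := R\cdot e_1 \oplus\cdots\oplus R\cdot e_r.$$
Since $\xi = a_{r+1}e_{r+1} + \sum_{i=1}^r a_i e_i$ with $a_{r+1}$ invertible, the element $e_{r+1}$ is expressible in terms of $\xi$ and the $e_i$ with $i\leq r$, so the two summands together span $A^1_R(\scP_r)$; reading off the coefficient of $e_{r+1}$ shows the sum is direct.

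Second, since $\xi\wedge\xi = 0$, the kernel of $\xi\wedge\colon A^1_R(\scP_r)\to A^2_R(\scP_r)$ splits compatibly as $R\cdot\xi \oplus \Ker(\xi\wedge|_N)$. The image of the previous differential $A_R^0(\scP_r)=R\to A^1_R(\scP_r)$ is precisely $R\cdot\xi$, so passing to cohomology kills the $R\cdot\xi$ summand and yields the claimed isomorphism
$$H^1(A^\bullet_R(\scP_r),\xi)\;\simeq\;\Ker\bigl(\xi\wedge|_N\bigr).$$

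Third, I would compute $\xi\wedge e_i$ for $1\leq i\leq r$: the parallelism relations $e_j\wedge e_i=0$ for $j,i\in\{1,\dots,r\}$ collapse the sum to
$$\xi\wedge e_i \;=\; a_{r+1}\,e_{r+1}\wedge e_i \;=\; -a_{r+1}\,e_i\wedge e_{r+1}.$$
Because $\{e_i\wedge e_{r+1}\}_{i=1}^r$ is a free $R$-basis of $A^2_R(\scP_r)$ and $a_{r+1}\in R^\times$, the map $\xi\wedge|_N$ sends a basis of $N$ to an $R$-basis of $A^2_R(\scP_r)$ and is therefore an isomorphism; in particular its kernel vanishes.

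There is no serious obstacle: the only subtlety to flag is that one must not attempt to apply Lemma \ref{lem:ker} directly, and should instead construct the complement of $R\cdot\xi$ inside $A^1_R(\scP_r)$ explicitly from the unit hypothesis on $a_{r+1}$.
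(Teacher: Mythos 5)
Your proof is correct and follows essentially the same route as the paper: the isomorphism $H^1(A_R^\bullet(\scP_r), \xi)\simeq \Ker\bigl(\xi\wedge\colon R\cdot e_1\oplus\cdots\oplus R\cdot e_r\to A_R^2(\scP_r)\bigr)$, which the paper obtains ``by a similar argument to Lemma \ref{lem:ker},'' is exactly the splitting $A_R^1(\scP_r)=R\cdot\xi\oplus N$ that you make explicit, and the concluding computation $\xi\wedge\eta=-a_{r+1}\sum_{i=1}^r c_i\, e_i\wedge e_{r+1}$ is identical. You have simply spelled out the step the paper leaves implicit.
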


\begin{proof}
By a similar argument to Lemma \ref{lem:ker}, 
we have 
\begin{equation}
H^1(A_R^\bullet(\scP_r), \xi)\simeq 
\Ker\left(
R\cdot e_1\oplus\cdots\oplus R\cdot e_r
\stackrel{\xi\wedge}{\longrightarrow}
A_R^2(\scP_r)
\right). 
\end{equation}
Let $\eta=c_1e_1+\dots +c_re_r$. Then 
$\xi\wedge\eta=-a_{r+1}\sum_{i=1}^r
c_i(e_i\wedge e_{r+1})$. 
Since $a_{r+1}\in R^\times$, $\xi\wedge\eta=0$ implies 
$\eta=0$. 
\end{proof}

\section{Degeneration of OS-algebras}
\label{sec:degen}

Let $\A=\{L_1, \dots, L_n\}$ be an arrangement of 
affine lines in $\C^2$. Consider the decomposition of 
$\A$ into parallel classes, that is a partition 
$$
\A=\A_1\sqcup \A_2\sqcup\cdots\sqcup\A_s, 
$$
such that two lines $L, L'\in\A$ are parallel if and only if 
they are contained in the same class, i.e., 
$L, L'\in\A_\alpha$ for some $1\leq \alpha\leq s$. 
Subsequently, we show the existence of 
certain surjective homomorphisms. The first one is 
\emph{the total degeneration}, 
\begin{equation}
\label{eq:totdeg}
\Delta_{tot}: 
A_R^*(\A)\longrightarrow
A_R^*(\scC_s). 
\end{equation}
Choose a parallel class $\A_\alpha$. 
Let $\sharp\A_\alpha=r$. Then we can construct 
a homomorphism, 
\emph{the directional degeneration} with respect to $\A_\alpha$, 
\begin{equation}
\label{eq:dirdeg}
\Delta_{dir}: 
A_R^*(\A)\longrightarrow
A_R^*(\scP_r). 
\end{equation}

\subsection{Total degeneration}
\label{subsec:tot}

Let $\A=\A_1\sqcup\cdots\sqcup\A_s$ be the parallel 
class decomposition as above. 
Let us denote by $\widetilde{e}_1, \dots, \widetilde{e}_s
\in A_R^1(\scC_s)$ 
the generators of $A_R^*(\scC_s)$. 

\begin{theorem}
\label{thm:totdeg}
There exists an $R$-algebra homomorphism 
$$
\Delta_{tot}: 
A_R^*(\A)\longrightarrow
A_R^*(\scC_s) 
$$
such that 
$$
e_i\longmapsto \widetilde{e}_\alpha
$$
if $L_i\in\A_\alpha$, $1\leq \alpha\leq s$ 
(see Figure \ref{fig:tot} for an example). 
\end{theorem}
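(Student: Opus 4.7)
The plan is to define $\Delta_{tot}$ as the unique $R$-algebra map on the free exterior algebra on $e_1,\dots,e_n$ sending $e_i \mapsto \widetilde{e}_\alpha$ whenever $L_i \in \A_\alpha$, and then to verify that it annihilates both families of Orlik--Solomon relations, so that it descends to $A_R^*(\A)$.

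First I would note that $A_R^*(\A)$ is by definition the quotient of the exterior algebra $\bigwedge^* (R\cdot e_1 \oplus \cdots \oplus R\cdot e_n)$ by the two-sided ideal $I$ generated in degree $2$ by the elements listed in the introduction. The assignment $e_i \mapsto \widetilde{e}_\alpha$ (with $\alpha$ determined by $L_i \in \A_\alpha$) extends uniquely to a graded $R$-algebra homomorphism $\widetilde{\Delta}$ from the exterior algebra into $A_R^*(\scC_s)$. Everything therefore reduces to checking that $\widetilde{\Delta}(I) = 0$.

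For relation (i), suppose $L_i \parallel L_j$. Then $L_i$ and $L_j$ lie in the same parallel class $\A_\alpha$, so $\widetilde{\Delta}(e_i \wedge e_j) = \widetilde{e}_\alpha \wedge \widetilde{e}_\alpha = 0$. For relation (ii), suppose $L_i \cap L_j \cap L_k \neq \emptyset$. Three distinct concurrent lines are pairwise non-parallel, so they lie in three distinct parallel classes $\A_\alpha,\A_\beta,\A_\gamma$. Then
\begin{equation*}
\widetilde{\Delta}(e_i\wedge e_j - e_i\wedge e_k + e_j\wedge e_k) = \widetilde{e}_\alpha\wedge\widetilde{e}_\beta - \widetilde{e}_\alpha\wedge\widetilde{e}_\gamma + \widetilde{e}_\beta\wedge\widetilde{e}_\gamma,
\end{equation*}
and since every triple of lines in $\scC_s$ passes through the origin, this last expression is itself a defining relation of $A_R^*(\scC_s)$, hence zero.

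Since $\widetilde{\Delta}$ kills the degree-$2$ generators of $I$ and is an algebra homomorphism, it kills all of $I$, and therefore descends to the desired $R$-algebra homomorphism $\Delta_{tot}: A_R^*(\A) \to A_R^*(\scC_s)$. Surjectivity follows automatically since each generator $\widetilde{e}_\alpha$ is hit by any $e_i$ with $L_i \in \A_\alpha$ (such an $L_i$ exists because $\A_\alpha$ is nonempty by construction of the partition). There is no serious obstacle here; the only small point to verify carefully is that three \emph{distinct} concurrent lines necessarily sit in three distinct parallel classes, which is immediate from the definition of the partition.
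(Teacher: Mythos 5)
Your proof is correct and follows essentially the same route as the paper: extend the assignment $e_i\mapsto\widetilde{e}_\alpha$ to the exterior algebra and check that the two families of Orlik--Solomon relations map to zero, using for (ii) the relation $\widetilde{e}_\alpha\wedge\widetilde{e}_\beta-\widetilde{e}_\alpha\wedge\widetilde{e}_\gamma+\widetilde{e}_\beta\wedge\widetilde{e}_\gamma=0$, which holds for all triples in the central arrangement $\scC_s$. Your extra care about distinctness of the three parallel classes is fine but not even needed, since that relation in $A_R^*(\scC_s)$ holds for arbitrary (not necessarily distinct) indices $\alpha,\beta,\gamma$.
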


\begin{figure}[htbp]
\begin{picture}(400,110)(0,0)
\thicklines

\multiput(50,50)(0,20){2}{\line(1,0){100}}
\multiput(90,10)(20,0){2}{\line(0,1){100}}
\put(55,15){\line(1,1){90}}

\put(35,65){$L_1$}
\put(35,45){$L_2$}
\put(40,5){$L_3$}
\put(85,0){$L_4$}
\put(105,0){$L_5$}

\put(165,40){\vector(1,0){60}}

\put(155,100){\small $\A_1=\{L_1, L_2\}$}
\put(155,85){\small $\A_2=\{L_3\}$}
\put(155,70){\small $\A_3=\{L_4, L_5\}$}

\multiput(250,59)(0,2){2}{\line(1,0){100}}
\multiput(299,10)(2,0){2}{\line(0,1){100}}
\put(255,15){\line(1,1){90}}

\put(235,55){$\widetilde{L}_1$}
\put(240,5){$\widetilde{L}_2$}
\put(300,0){$\widetilde{L}_3$}

\end{picture}
      \caption{Total degeneration of $\A$ to $\scC_3$}
\label{fig:tot}
\end{figure}

\begin{proof}
It is sufficient to show that 
\begin{itemize}
\item[(i)] 
$
\Delta_{tot}(e_i)\wedge
\Delta_{tot}(e_j)=0$ when $L_i, L_j\in\A$ are parallel, and 
\item[(ii)] 
$\Delta_{tot}(e_i)\wedge\Delta_{tot}(e_j)-
\Delta_{tot}(e_i)\wedge\Delta_{tot}(e_k)+
\Delta_{tot}(e_j)\wedge\Delta_{tot}(e_k)=0$ when 
$L_i\cap L_j\cap L_k\neq\emptyset$. 
\end{itemize}
Both are straightforward. Indeed, if $L_i//L_j$, then 
$L_i, L_j\in\A_\alpha$ for some $1\leq\alpha\leq s$. We have 
$\Delta_{tot}(e_i)=\Delta_{tot}(e_j)=\widetilde{e}_\alpha$ 
and (i) holds. 
In the case (ii), we use the relation 
$
\widetilde{e}_\alpha\wedge\widetilde{e}_\beta-
\widetilde{e}_\alpha\wedge\widetilde{e}_\gamma+
\widetilde{e}_\beta\wedge\widetilde{e}_\gamma=0$ for 
any $1\leq\alpha, \beta, \gamma\leq s$. 
\end{proof}

The next Lemma will be useful later. 

\begin{lemma}
\label{lem:kertot}
Let $\xi=\sum_{i=1}^na_ie_i\in A_R^1(\A)$ and 
$\eta\in A_R^1(\A)_0$. 
Assume that $\sum_{i=1}^na_i\in R^\times$ and 
$\xi\wedge\eta=0$. Then $\eta\in\Ker\Delta_{tot}$. 
\end{lemma}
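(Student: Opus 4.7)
The plan is to apply the degeneration homomorphism $\Delta_{tot}$ to the hypothesis $\xi\wedge\eta=0$ and reduce the problem to the central arrangement case already handled by Proposition \ref{prop:central}.

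First I would record the image of $\xi$ under $\Delta_{tot}$. Writing $b_\alpha:=\sum_{L_i\in\A_\alpha}a_i$ for each parallel class, the fact that $\Delta_{tot}$ sends $e_i\mapsto\widetilde{e}_\alpha$ whenever $L_i\in\A_\alpha$ gives
\begin{equation*}
\Delta_{tot}(\xi)=\sum_{\alpha=1}^s b_\alpha\widetilde{e}_\alpha\in A_R^1(\scC_s),\qquad \sum_{\alpha=1}^s b_\alpha=\sum_{i=1}^n a_i\in R^\times.
\end{equation*}
Similarly, writing $\eta=\sum_{i=1}^n c_ie_i$ with $\sum c_i=0$, one computes
\begin{equation*}
\Delta_{tot}(\eta)=\sum_{\alpha=1}^s d_\alpha\widetilde{e}_\alpha,\qquad d_\alpha:=\sum_{L_i\in\A_\alpha}c_i,
\end{equation*}
and observes $\sum_{\alpha=1}^s d_\alpha=\sum_{i=1}^n c_i=0$, so $\Delta_{tot}(\eta)\in A_R^1(\scC_s)_0$.

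Next I would apply the $R$-algebra homomorphism $\Delta_{tot}$ to the relation $\xi\wedge\eta=0$, obtaining
\begin{equation*}
\Delta_{tot}(\xi)\wedge\Delta_{tot}(\eta)=0\quad\text{in }A_R^2(\scC_s).
\end{equation*}
Since $\sum_\alpha b_\alpha\in R^\times$, Proposition \ref{prop:central} gives $H^1(A_R^\bullet(\scC_s),\Delta_{tot}(\xi))=0$, and under Lemma \ref{lem:ker} this cohomology is identified with $\Ker\bigl(A_R^1(\scC_s)_0\xrightarrow{\Delta_{tot}(\xi)\wedge}A_R^2(\scC_s)\bigr)$. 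Since $\Delta_{tot}(\eta)$ lies in this kernel, we conclude $\Delta_{tot}(\eta)=0$, i.e.\ $\eta\in\Ker\Delta_{tot}$.

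There is essentially no obstacle here: the lemma is a bookkeeping statement whose content is that the vanishing theorem for central arrangements (Proposition \ref{prop:central}), together with the fact that $\Delta_{tot}$ respects the augmentation-zero subspace and sends units to units on the level of total coefficients, transfers kernel information from $(\A,\xi)$ down to $(\scC_s,\Delta_{tot}(\xi))$. The only point worth being careful about is verifying that $\sum b_\alpha=\sum a_i$ and $\sum d_\alpha=\sum c_i$, which are immediate from the partition $\A=\bigsqcup_\alpha\A_\alpha$.
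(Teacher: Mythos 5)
Your proposal is correct and follows essentially the same route as the paper's own proof: push $\eta$ and $\xi$ forward under the algebra homomorphism $\Delta_{tot}$, note that $\Delta_{tot}(\eta)$ lies in $A_R^1(\scC_s)_0$ and that the coefficient sum of $\Delta_{tot}(\xi)$ is the unit $\sum_i a_i$, then invoke Proposition \ref{prop:central} together with Lemma \ref{lem:ker} to conclude $\Delta_{tot}(\eta)=0$. No gaps; the only difference is that you spell out the bookkeeping $\sum_\alpha b_\alpha=\sum_i a_i$ slightly more explicitly than the paper does.
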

\begin{proof}
Recall that 
$\eta=\sum_{i=1}^nc_ie_i$ is contained in 
$A_R^1(\A)_0$ if and only if 
$\sum_{i=1}^nc_i=0$. By the definition of 
$\Delta_{tot}$, we have 
$\Delta_{tot}(\eta)\in A_R^1(\scC_s)_0$. 

Set $\Delta_{tot}(\xi)=\sum_{\alpha=1}^s
\widetilde{a}_\alpha\widetilde{e}_\alpha$. Then 
$\sum_{\alpha=1}^s\widetilde{a}_\alpha=
\sum_{i=1}^na_i\in R^\times$. Since $\Delta_{tot}$ is 
an algebra homomorphism, we have 
$$
\Delta_{tot}(\xi)\wedge
\Delta_{tot}(\eta)=
\Delta_{tot}(\xi\wedge\eta)=0. 
$$
By Proposition \ref{prop:central} and Lemma \ref{lem:ker}, 
$\Delta_{tot}(\eta)=0$. 
\end{proof}

\subsection{Directional degeneration}
\label{subsec:dir}

As in the previous subsection, let $\A=\A_1\sqcup\cdots\sqcup\A_s$ be 
the parallel class decomposition. Here we fix a parallel class 
$\A_\alpha$. 
Suppose $\sharp\A_\alpha=r$ and 
$\A_\alpha=\{L_{i_1}, \dots, L_{i_r}\}$.

Let us denote by $\widetilde{e}_1, \dots, \widetilde{e}_r, 
\widetilde{e}_{r+1}
\in A_R^1(\scP_r)$ 
the generators of $A_R^*(\scP_r)$ such that 
$\widetilde{e}_1, \dots, \widetilde{e}_r$ correspond to 
parallel lines and 
$\widetilde{e}_{r+1}$ corresponds to the transversal line. 
(See \S\ref{subsec:par}.) 

\begin{theorem}
\label{thm:dirdeg}
Under the above setting, 
there exists a homomorphism of $R$-algebras 
$$
\Delta_{dir}: 
A_R^*(\A)\longrightarrow
A_R^*(\scP_r), 
$$
such that 
$$
\Delta_{dir}(e_i)=
\left\{
\begin{array}{ll}
\widetilde{e}_u&\mbox{ if $L_i\in\A_\alpha$ and $i=i_u$ $(1\leq u\leq r)$,}\\
\widetilde{e}_{r+1}&\mbox{ if $L_i\notin\A_\alpha$.}
\end{array}
\right.
$$
(see Figure \ref{fig:dir} for an example.) 
\end{theorem}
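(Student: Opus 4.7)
The plan is to mimic the proof of Theorem \ref{thm:totdeg}. I would define $\Delta_{dir}$ on the free $R$-module $A_R^1(\A)=\bigoplus_{i=1}^n R\,e_i$ by the stated formula on generators, extend it multiplicatively to the free exterior $R$-algebra on $e_1,\dots,e_n$, and then verify that both families of Orlik--Solomon relations are annihilated, so that $\Delta_{dir}$ descends to an $R$-algebra homomorphism $A_R^*(\A)\to A_R^*(\scP_r)$. Concretely, it suffices to check that (i) $\Delta_{dir}(e_i)\wedge\Delta_{dir}(e_j)=0$ whenever $L_i\mathbin{//}L_j$, and (ii) $\Delta_{dir}(e_i\wedge e_j-e_i\wedge e_k+e_j\wedge e_k)=0$ whenever $L_i\cap L_j\cap L_k\neq\emptyset$.

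For relation (i), a parallel pair $L_i,L_j$ lies in a common class $\A_\beta$. If $\beta=\alpha$, both generators are sent to distinct elements $\widetilde{e}_u,\widetilde{e}_v$ among the ``parallel'' generators of $A_R^*(\scP_r)$, so their wedge vanishes by the defining relation of $A_R^*(\scP_r)$ recalled in \S\ref{subsec:par}. If $\beta\neq\alpha$, both generators are sent to $\widetilde{e}_{r+1}$, whose square is zero.

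For relation (ii), three concurrent lines are pairwise non-parallel (distinct parallel lines do not meet), hence lie in three distinct parallel classes of $\A$; in particular at most one of them can lie in $\A_\alpha$. If none lies in $\A_\alpha$, all three generators are mapped to $\widetilde{e}_{r+1}$ and the triangle becomes identically zero. If exactly one does, say $L_i=L_{i_u}$, the image reduces to $\widetilde{e}_u\wedge\widetilde{e}_{r+1}-\widetilde{e}_u\wedge\widetilde{e}_{r+1}+\widetilde{e}_{r+1}\wedge\widetilde{e}_{r+1}=0$; the cases where the distinguished line is $L_j$ or $L_k$ are symmetric.

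There is no genuine obstacle here; the proof is a short case analysis, entirely parallel in spirit to Theorem \ref{thm:totdeg}. The one observation worth isolating is the combinatorial fact that at most one line of a concurrent triple can lie in a single parallel class, which prevents the triple-point relation from imposing anything beyond the identities already available in $A_R^*(\scP_r)$.
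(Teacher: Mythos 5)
Your proof is correct and follows exactly the route the paper intends: the paper's own proof of Theorem \ref{thm:dirdeg} simply says ``similar to Theorem \ref{thm:totdeg},'' and your case analysis of relations (i) and (ii) is precisely the verification being alluded to. The key observation you isolate---that at most one line of a concurrent triple lies in the distinguished parallel class $\A_\alpha$, so the triple-point relation collapses to identities already holding in $A_R^*(\scP_r)$---is the right one.
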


\begin{figure}[htbp]
\begin{picture}(400,110)(0,0)
\thicklines

\multiput(50,50)(0,20){2}{\line(1,0){100}}
\multiput(90,10)(20,0){2}{\line(0,1){100}}
\put(55,15){\line(1,1){90}}

\put(35,65){$L_1$}
\put(35,45){$L_2$}
\put(40,5){$L_3$}
\put(85,0){$L_4$}
\put(105,0){$L_5$}

\put(165,40){\vector(1,0){60}}

\put(155,100){\small $\A_1=\{L_1, L_2\}$}
\put(155,85){\small $\A_2=\{L_3\}$}
\put(155,70){\small $\A_3=\{L_4, L_5\}$}
\put(165,55){\footnotesize $(i_1=4, i_2=5)$}

\multiput(250,59)(0,2){2}{\line(1,0){100}}
\qbezier(250,57)(300,60)(350,63)
\multiput(290,10)(20,0){2}{\line(0,1){100}}

\put(353,55){$\widetilde{L}_3$}
\put(282,-3){$\widetilde{L}_1$}
\put(308,-3){$\widetilde{L}_2$}

\end{picture}
      \caption{Directional degeneration of $\A$ to $\scP_2$ 
with respect to $\A_3$}
\label{fig:dir}
\end{figure}

\begin{proof}
Similar to the proof of Theorem \ref{thm:totdeg}. 
\end{proof}

%

\section{Vanishing Results}
\label{sec:van}

Now we prove Theorem \ref{thm:main2}. Without loss of generality, 
we may assume $\mu(\overline{L}_0, p)\leq 1$ (otherwise, we may change 
the line at infinity so that $\mu(\overline{L}_0, p)\leq 1$ is satisfied).

Let $\A=\A_1\sqcup\cdots\sqcup\A_s$ be the parallel class decomposition. 
Each class $\A_\alpha$ of parallel lines determines an intersection 
$P_\alpha\in\overline{L}_0$ on the line at infinity. 
Let us define $m_\alpha$ by $m_\alpha:=\sharp(\barA_{P_\alpha})-1$. 
By definition, the multiplicity at $P_\alpha$ is equal to $m_\alpha+1$. 
We may assume that $P_1\in \barL_0$ is the unique point on $\barL_0$ 
with the multiplicity divisible by the prime $p$. In other words, 
we may assume 
$p|(m_1+1)$ and $p\not |(m_\alpha+1)$ for $\alpha\geq 2$. 

We first note that since $p|(n+1)$, we have $n=-1\in(\F_p)^\times$ (via the 
natural homomorphism $\Z\longrightarrow\F_p$). 
This enables us to apply Lemma \ref{lem:ker} for $\xi=\nu_{\F_p}$. 
Let 
$$
\eta=\sum_{i=1}^n c_ie_i\in A_{\F_p}^1(\A)_0
$$
and 
assume that $\eta\wedge\nu_{\F_p}=0$. We shall prove that $\eta=0$. 
The idea is as follows. We first consider total degeneration of 
$\eta$, and find certain relations among coefficients. Then we apply 
the directional degeneration with respect to $\A_1$. We will prove 
that if $\eta\neq 0$, then $\eta\wedge\nu_{\F_p}\neq0$, which 
contradicts the assumption. 

Consider first 
the total degeneration $\Delta_{tot}:A_{\F_p}^*(\A)\longrightarrow
A_{\F_p}^*(\scC_s)$. 
From the definition of the map $\Delta_{tot}$, 
$$
\Delta_{tot}(\eta)=
\sum_{\alpha=1}^s
\left(
\sum_{L_i\in\A_\alpha}c_i
\right)
\cdot\widetilde{e}_{\alpha}
\in A_{\F_p}^1(\scC_s)_0. 
$$
By Lemma \ref{lem:kertot}, $\eta\in\Ker\Delta_{tot}$, and hence we have 
\begin{equation}
\label{eq:zero}
\sum_{L_i\in\A_\alpha}c_i=0, \mbox{ for each }
1\leq\alpha\leq s. 
\end{equation}
Next we consider the directional degeneration 
$\Delta_{dir}:A_{\F_p}^*(\A)\longrightarrow A_{\F_p}^*(\scP_{m_\alpha})$ 
with respect to $\A_\alpha$. 
Suppose that 
$\A_\alpha=\{L_{i_1}, \dots, L_{i_{m_\alpha}}\}$. 
Then 
by definition, $\Delta_{dir}(e_{i_j})=\widetilde{e}_j$ for 
$1\leq j\leq m_\alpha$ and $\Delta_{dir}(e_k)=\widetilde{e}_{m_\alpha+1}$ 
for $L_k\notin\A_\alpha$. Thus we have 
\begin{equation}
\label{eq:nu}
\Delta_{dir}(\nu_{\F_p})=
\widetilde{e}_1+\dots+\widetilde{e}_{m_\alpha}+
(n-m_\alpha)\widetilde{e}_{m_\alpha+1}. 
\end{equation}
On the other hand, by using (\ref{eq:zero}), we have 
\begin{equation}
\label{eq:eta}
\begin{split}
\Delta_{dir}(\eta)&=
c_{i_1}\widetilde{e}_1+\dots+
c_{i_{m_\alpha}}\widetilde{e}_{m_\alpha}+
\left(
\sum_{\beta\neq\alpha}
\left(
\sum_{L_k\in\A_\beta}c_k
\right)
\right)\widetilde{e}_{m_\alpha+1}
\\
&=c_{i_1}\widetilde{e}_1+\dots+
c_{i_{m_\alpha}}\widetilde{e}_{m_\alpha}. 
\end{split}
\end{equation}

Note that 
$n-m_\alpha=(n+1)-(m_\alpha+1)$. 
If $\alpha\geq 2$, then 
$m_\alpha+1=\sharp\barA_{P_\alpha}$ is not divisible by $p$. 
Hence we have $n-m_\alpha\in\F_p^\times$. By the assumption, 
we also have $n-m_1=0\in\F_p$. 
Using 
(\ref{eq:nu}) and (\ref{eq:eta}), we can apply 
Proposition \ref{prop:parallel}, and we have 
$\Delta_{dir}(\eta)=0$ (equivalently, $c_i=0$ for $L_i\in\A_\alpha$) for 
any $\alpha\neq 1$. 
Now, for simplicity, we set $\A_1=\{L_1, L_2, \dots, L_r\}$ ($r=m_1$). 
From the assumption that $\barA$ is essential, $r<n$. 
We may assume that $\eta$ has an expression 
\begin{equation}
\label{eq:expresseta}
\eta=
c_1e_1+\cdots+
c_re_r, 
\end{equation}
with $c_1+c_2+\dots +c_r=0$. 
By using Brieskorn decomposition (\cite{orl-ter}) 
$A_R^2(\A)=\bigoplus_{X\in L_2(\A)}A_R^2(\A_X)$, ($X$ runs all 
codimension two intersections, i.e., points), it is easily seen 
that 
\begin{equation}
\{e_i\wedge e_j\mid 1\leq i\leq r, r+1\leq j\leq n\}
\end{equation}
is a linearly independent set in $A_{\F_p}^2(\A)$. Hence if 
$\eta\neq 0$, then we have 
$$
\eta\wedge\nu_{\F_p}=\eta\wedge(e_{r+1}+\dots+e_n)\neq 0. 
$$
(Note that the equation (\ref{eq:expresseta}) implies that 
the support of $\eta$ is contained in the parallel 
class $\A_1$.) 
Thus $\eta\wedge\nu_{\F_p}=0$ implies that $\eta=0$. 
This completes the proof of Theorem \ref{thm:main2}.



\medskip

\noindent
{\bf Acknowledgement.} 
The authors thank 
Professor Alexandru Dimca for his 
suggestions and encouragements. 
The second author is supported by 
JSPS Grant-in-Aid for Scientific Research (C).

\end{document}